	\newtheorem{thm}{Theorem}[section]
	\newaliascnt{lemma}{thm}
	\newaliascnt{prop}{thm}
	\newtheorem{prop}[prop]{Proposition} 
	\newaliascnt{cor}{thm}
	\theoremstyle{remark}
	\newaliascnt{rem}{thm}
	\newtheorem{rem}[rem]{Remark}
	\theoremstyle{definition}
	\newaliascnt{exm}{thm}
	\newaliascnt{notn}{thm}
	\newaliascnt{defn}{thm}
	\newtheorem{defn}[defn]{Definition}
    \newcommand{\Abl}{block of type $1$}
    \newcommand{\Abls}{blocks of type $1$}
    \newcommand{\Bbl}{block of type $2$}
    \newcommand{\Bbls}{blocks of type $2$}
\begin{document}

\title[Graphs of Hyperbinary Expansions]{\footnotesize Isomorphisms of graphs of Hyperbinary Expansions\\and Efficient Algorithms for Stern's Diatomic Sequence}
\author[Alessandro De Paris]{Alessandro De Paris}
\address{Link Campus University, Roma (Italy)}
\address{ORCID: \href{https://orcid.org/0000-0002-4619-8249}{0000-0002-4619-8249}}
\email{a.deparis@unilink.it}
\keywords{hyperbinary representation; Stern's diatomic sequence; directed graphs isomorphisms; binary signed-digit representation.}
\subjclass[2020]{11B83; 05C30; 68Q42; 68R01; 68R15.}

\begin{abstract}
To investigate hyperbinary expansions of a nonnegative integer~$n$, an edge-labeled directed graph $A(n)$ has recently been introduced. After pointing out some new simple facts about its cyclomatic number, we give a relatively simple description of its structure and prove that if $m,n$ are even numbers for which $A(n)$ and $A(m)$ are isomorphic as edge-labeled graphs, then $m=n$. From the structure of $A(n)$ we also derive a formula related to Stern's diatomic sequence, and in the same vein discuss some algorithms that recently appeared in the literature.
\end{abstract}

\maketitle

\section{Introduction}

The present work pursues the line of research that began in \cite{BD} with the introduction of edge-labeled (or colored) directed graphs $A(n)$, whose vertices are the hyperbinary expansions of a given positive integer $n$. It is well known that the number $b(n)$ of hyperbinary expansions coincides with the term $c(n+1)$ of Stern's diatomic sequence. That is a much-studied topic with a long history and many features, for which we refer to \cite{N} as a good starting source of information. Other references, more specific to the subject of the present work, can be found in \cite[Introduction]{BD}, to which we add \cite{KS} because the techniques used there present some similarities with ours. A well-known striking feature, also reported in \cite[Introduction]{BD}, was the discovery in \cite{CW} that every rational number appears just once as a ratio between two subsequent terms in the sequence. The relationship between hyperbinary and binary signed-digit representations pointed out in \cite{M} also reveals its potential for computational purposes.

The last two sections of \cite{BD} are devoted to identify the integers that give rise to graphs with cyclomatic numbers zero and one, respectively. Along that line, to do the same for subsequent small integers may be of some interest, and \autoref{Ciclomatico} here is about that. In the subsequent section we prove our main result: graphs of hyperbinary expansions of two different even numbers cannot be isomorphic as edge-labeled graphs. Since the edge-labeled graph of each odd integer is isomorphic to the graph of an even number, we end up with a characterization of all pairs of integers with isomorphic edge-labeled graphs of hyperbinary expansions. That isomorphism theorem is proven by means of a detailed description of the structure of $A(n)$, which turns out to be relatively simple. We hope that it may shed new light on Stern's sequence. In the last section, from the structure of $A(n)$ we derive a formula for $b(n)$, more explicit than the well-known recursive one. We also turn it into an efficient algorithm, alternative to those in \cite{M}.

Before starting with the exposition of the results, we recapitulate here some basics. We shall keep $\Sigma^*$ as a standing notation for the free monoid generated by $0$, $1$, $2$. Its elements are just all finite sequences of these numbers and, as customary, they are named \emph{words} over the alphabet $\{0,1,2\}$ and are denoted by juxtaposition (that is, without parentheses and commas, though this way we lose the notation for the unit of $\Sigma$*, that is, the empty word). The multiplication in $\Sigma^*$ merges the words (again by juxtaposition).

When a word $x_0\ldots x_k\in\Sigma^*$ begins with $x_0\ne 0$, we say that it is a \emph{hyperbinary expansion} of the nonnegative integer $n:=\sum_{i=0}^kx_i2^{k-i}$. In conformity with \cite{BD}, we also assume \[\mathcal{H}(n)\qquad\text{and}\qquad b(n)\] as standing notation for the set of hyperbinary expansions of $n$ and its cardinality. But, in difformity with \cite{BD}, we allow $0\in\mathbb{N}$ ($\mathbb{N}_0$ is used there, instead, for the set of nonnegative integers) with the empty word as its unique hyperbinary expansion. This allows us not to treat it as a separate case like it is done instead in the presentation of the sequence $b$ at \cite[p.~27]{BD}.

Let us also recall here the definition of the labeled directed graph of hyperbinary expansions of a nonnegative integer $n$. To be consistent with \cite{BD}, the notation $A(n)$ will always indicate that graph. The set of arcs of a directed graph $G$ will be denoted by $\mathcal{E}(G)$ instead, and the set of vertices by $V(G)$. Let us recall that the two labels for $A(n)$ are conventionally denoted by $\to$ and $\twoheadrightarrow$. A \emph{single-step reduction of type $\to$} is a pair
\[
\left(\mathbf{x}02\mathbf{y},\mathbf{x}10\mathbf{y}\right)\quad\text{or}\quad\left(2\mathbf{y},10\mathbf{y}\right)
\]
with $\mathbf{x},\mathbf{y}\in\Sigma^*$; one \emph{of type $\twoheadrightarrow$} is a pair
\[
\left(\mathbf{x}12\mathbf{y},\mathbf{x}20\mathbf{y}\right)
\]
instead. The set of vertices of $A(n)$ is $\mathcal{H}(n)$ and the arcs are precisely all possible single step-reductions among them, accordingly labeled.

By a \emph{reduction} we mean a sequence of single-step reductions such that the child in each single-step reduction of the sequence, except the last, is the parent in the subsequent one (in the statement of \cite[Corollary~2.2, p.~29]{BD} they seem to use the term `reduction' only for the maximally lengthened ones). In graph-theoretic terms, they are directed paths in $A(n)$. These reductions induce a partial order on $A(n)$ that is compatible with the (total) shortlex order, as explained in \cite[Section~2]{BD}. The minimum and the maximum with respect to the shortlex order turn out to be also a global minimum and a global maximum with respect to the partial order, and are respectively the hyperbinary expansion without $0$s and the ordinary binary expansion of $n$ (see \cite[Corollary~2.2]{BD}). In the subsequent exposition, as in \cite{BD}, we shall often refer to the hyperbinary expansion without $0$s of $n$, as its \emph{minimal hyperbinary expansion}.

\section{On the cyclomatic number of \texorpdfstring{$A(n)$}{A(n)}}\label{Ciclomatico}

We begin with a description of the cyclomatic number of $A(n)$, for which we keep the notation $v(n)$ that is adopted in \cite{BD} (not to be confused with the set $V(G)$ of vertices of a graph $G$). The description will be solely in terms of $n$, that is, without reference to the structural data of $A(n)$ as it is done, instead, in \cite[Proposition~6.1]{BD}. To this end, let us first recall the well-known recursive formula for $b(n)$:
\begin{itemize}
\item $b(0)=1$;
\item $b(2n+1)=b(n)$;
\item $b(2n+2)=b(n+1)+b(n)$
\end{itemize}
(with $n\ge 0$; cf.~\cite[Proposition~2.2]{BD}). A similar formula for $v(n)$, but that involves $b(n)$ in addition, can be easily proved as follows.

\begin{prop}
The function $v(n)$ is determined by the following recursive conditions (where $n\ge 0$):
\begin{itemize}
\item $v(0)=0$;
\item $v(2n+1)=v(n)$;
\item $v(4n+2)=v(2n)+v(n)+b(n)-1$;
\item $v(4n+4)=v(2n+2)+v(n)+b(n)-1$.
\end{itemize}
\end{prop}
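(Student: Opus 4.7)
The plan is to decompose $A(2n+2)$ along the last digit of its vertices and to exploit the identity
\[
v(n) = |\mathcal{E}(A(n))| - b(n) + 1,
\]
which holds because $A(n)$ is weakly connected: as recalled at the end of the Introduction, each vertex admits a directed path to the minimal hyperbinary expansion. The base case $v(0) = 0$ is immediate, and $v(2n+1) = v(n)$ follows at once from the observation that $\alpha \mapsto \alpha 1$ is a bijection $\mathcal{H}(n) \to \mathcal{H}(2n+1)$ under which edges correspond: every reducible pattern ends in the digit $2$, so the trailing $1$, having no successor, cannot take part in any reduction.

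For even values I would first prove the auxiliary identity
\[
v(2n+2) = v(n+1) + v(n) + b(n) - b_2(n) - 1,
\]
where $b_2(n)$ denotes the number of elements of $\mathcal{H}(n)$ ending in $2$. Since $2n+2$ is even, $V(A(2n+2))$ partitions into $V_0 = \{\alpha 0 : \alpha \in \mathcal{H}(n+1)\}$ and $V_2 = \{\alpha 2 : \alpha \in \mathcal{H}(n)\}$. Three facts then need to be established. First, the induced subgraph on $V_0$ is isomorphic to $A(n+1)$ via $\alpha 0 \leftrightarrow \alpha$, because a trailing $0$ cannot sit inside any reducible pattern. Second, the induced subgraph on $V_2$ is isomorphic to $A(n)$ via $\alpha 2 \leftrightarrow \alpha$: reductions of $\alpha$ lift unambiguously to reductions of $\alpha 2$ that preserve the trailing $2$ (even when the reducible pattern of $\alpha$ sits at its right end, as in $\beta 0 2 \to \beta 1 0$ lifting inside $\alpha 2 = \beta 0 2 \, 2$), and conversely every edge with both endpoints in $V_2$ arises from such a lift. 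Third, no edges go from $V_0$ to $V_2$ (reductions can turn a trailing $2$ into a $0$ but never the reverse), while the edges from $V_2$ to $V_0$ are precisely the single-step reductions acting on the trailing $2$ of $\alpha 2$; these exist exactly when the digit to its left is not itself a $2$ (with the initial-$2$ rule covering the case of $\alpha$ empty), so their number is $b(n) - b_2(n)$. Plugging the edge counts into the cyclomatic identity and using $b(2n+2) = b(n+1) + b(n)$ yields the auxiliary formula.

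The two stated identities then follow by specialization. For $v(4n+2)$ I take $n \leftarrow 2n$, use $v(2n+1) = v(n)$, and rewrite $b(2n) - b_2(2n) = b(n)$: any expansion of $2n$ not ending in $2$ ends in $0$ and is uniquely of the form $\alpha 0$ with $\alpha \in \mathcal{H}(n)$ (the case $n = 0$ matches by direct inspection, as both sides equal~$1$). For $v(4n+4)$ I take $n \leftarrow 2n+1$, use $v(2n+1) = v(n)$ and $b(2n+1) = b(n)$, and observe that $b_2(2n+1) = 0$ by parity, since the last digit of a hyperbinary expansion has the same parity as its value. I expect the most delicate point to be the second fact above: one must verify that boundary reductions of $\alpha$ lift to a single edge lying entirely in $V_2$, and that the appended $2$ creates no additional edges between vertices of $V_2$, so that the induced subgraph is truly a copy of $A(n)$ rather than a super- or a subgraph.
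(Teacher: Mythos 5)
Your proposal is correct and follows essentially the same route as the paper's proof: both decompose $A(2m)$ by the last digit of its vertices into induced copies of $A(m-1)$ and $A(m)$ (the paper simply cites \cite[Proposition~3.3]{BD} for this decomposition and for the description of the bridging arcs, which you reprove from scratch), count the bridging arcs, and apply the cyclomatic identity for connected graphs. Your bridging-arc count $b(n)-b_2(n)$ agrees with the paper's $b\left(\lfloor n/2\rfloor\right)$, and the two specializations $m=2n+1$, $m=2n+2$ at the end are identical.
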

\begin{proof}
Since $A(0)$ is a simple graph with only one vertex, clearly $v(0)=0$. Since $A(2n+1)$ is isomorphic to $A(n)$ (see \cite[Proposition~3.2]{BD}), we also immediately have $v(2n+1)=v(n)$.

By \cite[Proposition~3.3]{BD} the set of vertices of $A(2m)$, with $m\ge 1$, can be partitioned into two sets such that the induced subgraphs are respectively isomorphic to $A(m-1)$ and $A(m)$. The tails of the remaining arcs (named \emph{bridging arcs} in \cite[p.~32]{BD}) are all the hyperbinary expansions of one of the forms $\mathbf{x}02$ or $\mathbf{x}12$, according to whether $m-1$ is even or odd, with $\mathbf{x}$ being a hyperbinary expansion of $\lfloor\frac{m-1}2\rfloor$.

Now, it is an elementary general fact that when a connected simple graph $G$ is partitioned into two nonempty induced subgraphs $G_1$ and $G_2$ that are connected, and $b$ is the (necessarily positive) number of remaining arcs, then the cyclomatic numbers are related by the formula
\[
v(G)=v\left(G_1\right)+v\left(G_2\right)+b-1\;.
\]

Taking $m=2n+1$ we have
\[
v(4n+2)=v(2n)+v(2n+1)+b(n)-1=v(2n)+v(n)+b(n)-1
\]
and taking $m=2n+2$ we have
\[
v(4n+4)=v(2n+1)+v(2n+2)+b(n)-1=v(2n+2)+v(n)+b(n)-1\;.
\]
\end{proof}

Since $A(n)$ is connected and nonempty, $v(n)+b(n)-1$ is the number $a(n)$ of arcs. Hence the last two recursive conditions may also be written in the simpler form
\begin{itemize}
\item $v(4n+2)=v(2n)+a(n)$;
\item $v(4n+4)=v(2n+2)+a(n)$.
\end{itemize}

In the same vein, one may also easily set up a recursive formula for $a(n)$, and gather some nice remarks about the functions $a$, $b$, $v$, which are invariants under isomorphisms of the graphs $A(n)$ (though they are dependent, because $A(n)$ is always connected). But here we prefer to stick to identifying those integers that give rise to graphs with given small cyclomatic numbers, pursuing the line of the last two sections of \cite{BD}. The recursive conditions for $v(n)$ suffice to this end.

First of all, the condition $v(2n+1)=v(n)$ allows one to reduce the problem to the case when $n$ is even. The conditions $v(4n+2)=v(2n)+a(n)$ and $v(4n+4)=v(2n+2)+a(n)$ connect the value of $v$ on any positive even number with the value on smaller even numbers, and the latter (value) cannot be bigger. In particular, if $v(4n+2)=0$ or $v(4n+4)=0$, then $a(n)=0$ and this is possible only when $n$ has only one hyperbinary expansion, that contains only $1$s (otherwise a reduction can occur somewhere); that is, $n=2^t-1$, with $t$ being a nonnegative integer. This shows that a positive even number on which $v$ vanishes, must be of the form $2^{t+2}-2$ or $2^{t+2}$. But, again by the recursive conditions, for $t=0$ we have $v(2)=v(0)+a(0)=0$, $v(4)=v(2)+a(0)=0$ and for all $t$, $v\left(2^{t+3}-2\right)=v\left(2^{t+2}-2\right)+a\left(2^{t+1}-1\right)$ and $v\left(2^{t+3}\right)=v\left(2^{t+2}\right)+a\left(2^{t+1}-1\right)$. Hence, by induction and including $v(0)=0$ we have that the even numbers $n$ such that $v(n)=0$ are exactly those of the form $2^t-2$ or $2^t$, with $t$ a positive integer. For arbitrary positive integers $n$, the recursive condition $v(2n+1)=v(n)$ easily leads to the content of \cite[Theorem~6.2]{BD} (without $>0$, as here we are allowing $n=0$).

Let us now suppose that $v(4n+2)=1$, and take again into account $v(4n+2)=v(2n)+a(n)$. It cannot be $a(n)=0$, otherwise by the previous analysis we would have $v(4n+2)=0$. Hence $v(2n)=0$ and $a(n)=1$. Again by the previous analysis, $v(2n)=0$ implies that $n=2^t-1$ or $n=2^t$, with $t$ being a nonnegative integer. But $n=2^t-1$ and $n=2^0$ are excluded because lead to $a(n)=0$, and since for $t\ge 1$ the expansion $1^{t-1}2\in\Sigma^\ast$ of $n=2^t$ admits a reduction of length $t$, from $a(n)=1$ we deduce that $t=1$ and $n=2$; therefore $4n+2=10$. Similar arguments show that from $v(4n+4)=1$ also follows $n=2$, and therefore $4n+4=12$. This shows that the even numbers $n$ for which $v(n)=1$ are only $10$ and $12$, which is equivalent to the last statement in \cite[Section~7]{BD}, and (as is also proved therein) from the recursive condition $v(2n+1)=v(n)$ can be immediately deduced that the numbers $n$ with $v(n)=1$ are exactly those of the form $(12\pm 1)2^t-1$, with $t$ being a nonnegative integer.

To find out the even numbers on which $v$ takes value $2$, we have to examine the cases
\begin{itemize}
\item $v(2n)=1$ and $a(n)=1$;
\item $v(2n+2)=1$ and $a(n)=1$;
\item $v(2n)=0$ and $a(n)=2$;
\item $v(2n+2)=0$ and $a(n)=2$.
\end{itemize}
We know that on even numbers $v$ takes value $1$ only on $10$ and $12$, and taking into account that $a(4)=2$, $a(5)=1$ and $a(6)=2$, the first two cases give $n=5$ and the values $4\cdot 5+2=22$ and $4\cdot 5+4=24$. Proceeding like before, the third case leads to $n=4$, which gives the value $4\cdot 4+2=18$. The fourth case leads to $n=6$, which gives $4\cdot 6+4=28$. Hence 
\[
n\text{ even }\,\land\,v(n)=2\;\,\iff\;\,n\in\{18,22,24,28\}
\]
which is the first step beyond \cite{BD}. Another step ahead is
\[
n\text{ even }\,\land\,v(n)=3\;\,\iff\;\,n\in\{20,26,34,46,48,60\}\;,
\]
for which we omit the now straightforward details.

Instead of investigating these sets for every assigned cyclomatic number, now we prefer to focus on two goals that can be addressed by means of them, which we believe are somewhat suggested in \cite{BD}. The first one is to understand to what extent the labeled directed graph structures characterize the numbers they come from (though the invariants $b(n)$ and $v(n)$ are not affected by the labeling and the directions of the arcs, they may be helpful in that respect). Next, the results about the case $v(n)=1$ in \cite[Section~7]{BD} are worked out in connection with the blocks of $1$'s and $2$'s in the minimal hyperbinary expansion of $n$. A way to calculate $v(n)$, and also $b(n)$, from the number and the lengths of those blocks may be interesting not only in view of the first goal, but also for the computational aspects of related topics.

In the following section we show that a slightly different block decomposition of the minimal hyperbinary extension of $n$ gives a lot of information on the structure of $A(n)$, that allows to prove with moderate effort that even numbers with isomorphic labeled directed graphs of hyperbinary expansions must coincide (which in turn immediately gives a characterization for whatever numbers with such isomorphic graphs, because of the isomorphims between $A(n)$ and $A(2n+1)$). The results will also shed light on our previous calculations (and that is why we stopped them at $v(n)=3$). In the last section we derive a formula for $b(n)$ in terms of the block decomposition.

As anticipated, our blocks will be slightly different from the mere sequences of $1$s and of $2$s in a given minimal hyperbinary expansion. To introduce them, let us first point out that the trees of \cite[Section~6]{BD} have no branches, that is (being also connected), are directed path graphs. Indeed, since (by \cite[Corollary~2.2 (i)]{BD}) each reduction can be lengthened to reach the sink of $A(n)$, that is, the ordinary binary expansion of $n$, we have that any vertex with outdegree $\ge 2$ gives rise to at least one (non-directed) cycle. Our blocks will be precisely the minimal hyperbinary expansions of even numbers~$n$ with $v(n)=0$, and their path graphs $A(n)$ in some sense will `span' the graphs of hyperbinary expansions of all numbers.

\section{Isomorphism theorem}

\begin{defn}
For any positive integer $t$, let us name \emph{\Abl} the element $1^t2$ of the monoid $\Sigma^\ast$, that is, $1\cdots 12$, and \emph{\Bbl} the element $2^t=2\cdots 2$.
\end{defn}
Plainly, any minimal hyperbinary expansion that ends with $2$, that is, that of a positive even number, can be written in a unique way as a product of such blocks with no consecutive \Bbls. Minimal hyperbinary expansions of odd numbers are in a unique way products in $\Sigma^*$ of such a product of blocks and a power $1^t=1\cdots 1$ ($t\ge 1$). We refer to this decomposition as the \emph{block decomposition} of minimal hyperbinary expansions.

If the minimal hyperbinary expansion of $n$ is a \Abl, then $A(n)$ is the directed path graph
\begin{equation}\label{Path1}
1^t2\twoheadrightarrow 1^{t-1}20\twoheadrightarrow\cdots\twoheadrightarrow 20^t\rightarrow 10^{t+1}\;,
\end{equation}
whereas for an $n$ whose minimal hyperbinary expansion is a \Bbl, $A(n)$ is the directed path graph
\begin{equation}\label{Path2}
2^t\to 102^{t-1}\to\cdots\to 1^t0\;.
\end{equation}
When we have a minimal hyperbinary expansion made of (exactly) two blocks, each step in any reduction occurs on either one of the two parts (i.e., factors in $\Sigma^\ast$) that come from the original two blocks, according to \eqref{Path1} or \eqref{Path2}. Here we say that a step occurs `on a part', when that part include the last digit in the parent that is modified in the child, and we shall soon explain more precisely the meaning of `come from'. Those single-step reductions can be performed almost freely, with only the following constraints:
\begin{enumerate}
\item\label{Constr12} when we have a \Abl\ followed by a \Bbl, the first single-step reduction on the second block can occur only after that the first single-step reduction has occurred on the first block;
\item\label{Constr11} when we have two consecutive \Abls, the last single-step reduction on the expansion coming from reduction of the second block can occur only after that the first single-step reduction has occurred on the first block;
\item\label{Constr21} when we have a \Bbl\ followed by a \Abl, the last single-step reduction on the expansion coming from reduction of the second block can occur only after that the last single-step reduction has occurred on the expansion coming from reduction of the first block;
\end{enumerate}

In the above description of the constraints, by a part that `come from' reduction of a block we simply mean that it is a vertex in the directed path graphs \eqref{Path1} or \eqref{Path2}, multiplied by the other part of the whole expansion. But one has to pay attention to the fact that, once the `constrained' single-step reduction on the second part has occurred, the first part coincide with an expansion in \eqref{Path1} or \eqref{Path2}, except for the last digit, which is turned from $0$ to $1$ (and is overlapped by the second part). This do not give rise to substantially new single-step reductions on the first part, other than those in \eqref{Path1} or \eqref{Path2}: after that the mentioned `unlocked' single-step reduction has occured in the second part, it suffices to truncate them by removing the last digit. This explains the meaning of the `coming' of the two parts from the original two blocks.

\begin{rem}\label{CG}
The above fact can be expressed graph-theoretically and, more generally, for whatever product $\mathbf{n}=\mathbf{n}_1\mathbf{n}_2$ of minimal hyperbinary expansions of $n,n_1,n_2$ with $n_1$, $n_2$ and (henceforth) $n$ even. For, let us consider the product $\mathcal{H}\left(n_1\right)\times\mathcal{H}\left(n_2\right)$ and map a subset $H$ of it into $\mathcal{H}(n)$. Pairs $\left(\mathbf{n}''_1,\mathbf{n}'_2\right)$ in the subset $H$ are exactly those such that $\mathbf{n}''_1$ ends with $0$ or $\mathbf{n}'_2$ is a short or empty expansion (see \cite[Sect.~5]{BD} for the notion of short and long expansions of a positive integer). The image of $\left(\mathbf{n}''_1,\mathbf{n}'_2\right)$ is the product $\mathbf{n}'_1\mathbf{n}'_2$ (in $\Sigma^\ast$), with $\mathbf{n}'_1$ being either $\mathbf{n}''_1$, when $\mathbf{n}'_2$ is short or empty, or its truncation on the last digit, when $\mathbf{n}'_2$ is long.

By our previous discussion and the fact that every $\mathbf{n}'\in\mathcal{H}(n)$ comes from reduction of $\mathbf{n}$ (by \cite[Corollary~2.2 (ii)]{BD}), we have that the map $H\to\mathcal{H}(n)$ is surjective. Since the construction can easily be inverted, we conclude that the map is bijective.
\end{rem}

Our previous description of the single-step reductions can easily be cast in these terms as well. Let us recall that a Cartesian product (or box product) $G_0\,\Box\, G_1$ of simple directed graphs has $V\left(G_0\right)\times V\left(G_1\right)$ as its set of vertices, an arc $\big(\,\left(x_0,x_1\right)\,,\,\left(y_0,x_1\right)\,\big)$ for each choice of an arc $\left(x_0,y_0\right)$ of $G_0$ and a vertex $x_1$ of $G_1$, and an arc $\big(\,\left(x_0,x_1\right)\,,\,\left(x_0,y_1\right)\,\big)$ for each choice of a vertex $x_0$ of $G$ and an arc $\left(x_1,y_1\right)$ of $G_1$. Plainly, the map $H\to\mathcal{H}(n)$ in \autoref{CG} gives an isomorphism between the subgraph induced by $A\left(n_1\right)\,\Box\, A\left(n_1\right)$ on $H$, and $A(n)$. Finally, reversing the map $H\to\mathcal{H}(n)$, we can summarize our previous speech by saying that $A(n)$ canonically embeds, as an induced subgraph, into the Cartesian product $A\left(n_1\right)\,\Box\, A\left(n_2\right)$. Let us also note that each arc of a product involve, in particular, a choice of one of the two factors and an arc in it. Hence arc-labeling on factors induce arc-labeling on the product, and in the present case the induced labeling on $A\left(n_1\right)\,\Box\, A\left(n_2\right)$ makes the morphism that embeds $A(n)$ into it a morphism of arc-labeled directed graphs.

The picture below illustrates the above situation for all block decompositions made of two blocks of the shortest length. Here we dismiss the rule introduced in \cite[p.~30]{BD}, by which the vertical alignment of each vertex in the picture reflects its weight $\omega$ (that is, the sum of its digits, which actually also coincide with the function $T$ introduced in \cite[Sect.~4]{BD} in connection with earlier works on the subject).

\noindent\begin{tikzpicture}[c/.style={{circle, shading=ball, ball color=cyan}}, b/.style={{circle, shading=ball, ball color=black}}]
\node (i) at (0.7,3.7) [label]{i)};
 \draw [draw=black] (-1.3,0) rectangle (2.95,3.4);
 \node[c] (Ax2)  at (0,.6)  [label=below:\tiny $12$]{};
 \node[c] (Bx2)  at (1.3,.6)  [label=below:\tiny $20$]{} ;
 \node[c] (Cx2)  at (2.6,.6)  [label=below:\tiny $100$]{} ;
 \node[c] (Ay2)  at (-.7,1.5)  [label=left:\tiny $2$]{} ;
 \node[c] (By2)  at (-.7,2.8)  [label=left:\tiny $10$]{} ;
 \node[b] (AA2)  at (0,1.5)  [label=below:\tiny $122$]{} ;
 \node[b] (BA2)  at (1.3,1.5)  [label=below:\tiny $202$]{} ;
 \node[b] (CA2)  at (2.6,1.5)  [label=below:\tiny $1002$]{} ;
 \node[b] (BB2)  at (1.3,2.8)  [label=above:\tiny $210$]{} ;
 \node[b] (CB2)  at (2.6,2.8)  [label=above:\tiny $1010$]{} ;
 \path [->>, cyan, thick] (Ax2) edge (Bx2);
 \path [->, cyan, thick] (Bx2) edge (Cx2);
 \path [->, cyan, thick] (Ay2) edge (By2);
 \path [->>, thick] (AA2) edge (BA2);
 \path [->, thick] (BA2) edge (BB2);
 \path [->, thick] (BB2) edge (CB2);
 \path [->, thick] (BA2) edge (CA2);
 \path [->, thick] (CA2) edge  (CB2);
 \node (ii) at (5.8,5) [label]{ii)};
 \draw [draw=black] (3.35,0) rectangle (7.8,4.75);
 \node[c] (Ax)  at (4.8,.6)  [label=below:\tiny $12$]{} ;
 \node[c] (Bx)  at (6.1,.6)  [label=below:\tiny $20$]{} ;
 \node[c] (Cx)  at (7.4,.6)  [label=below:\tiny $100$]{} ;
 \node[c] (Ay)  at (4.1,1.5)  [label=left:\tiny $12$]{} ;
 \node[c] (By)  at (4.1,2.8)  [label=left:\tiny $20$]{} ;
 \node[c] (Cy)  at (4.1,4.1)  [label=left:\tiny $100$]{} ;
 \node[b] (AA)  at (4.8,1.5)  [label=below:\tiny $1212$]{} ;
 \node[b] (BA)  at (6.1,1.5)  [label=below:\tiny $2012$]{} ;
 \node[b] (CA)  at (7.4,1.5)  [label=below:\tiny$10012$]{} ;
 \node[b] (AB)  at (4.8,2.8)  [label=above:\tiny $1220$]{} ;
 \node[b] (BB)  at (6.1,2.8)  [label=above:\tiny\hspace{-22pt} $2020$]{} ;
 \node[b] (CB)  at (7.4,2.8)  [label=above:\tiny\hspace{-26pt} $10020$]{} ;
 \node[b] (BC)  at (6.1,4.1)  [label=above:\tiny $2100$]{} ;
 \node[b] (CC)  at (7.4,4.1)  [label=above:\tiny $10100$]{} ;
 \path [->>, cyan, thick] (Ax) edge (Bx);
 \path [->, cyan, thick] (Bx) edge (Cx);
 \path [->>, cyan, thick] (Ay) edge (By);
 \path [->, cyan, thick] (By) edge (Cy);
 \path [->>, thick] (AA) edge (BA);
 \path [->>, thick] (BA) edge (BB);
 \path [->>, thick] (AA) edge (AB);
 \path [->>, thick] (AB) edge (BB);
 \path [->, thick] (BB) edge (CB);
 \path [->, thick] (CB) edge (CC);
 \path [->, thick] (BB) edge (BC);
 \path [->, thick] (BC) edge (CC);
 \path [->, thick] (BA) edge (CA);
 \path [->>, thick] (CA) edge (CB);
\node (iii) at (9.7,5) [label]{iii)};
\draw [draw=black] (8.2,0) rectangle (11.25,4.75);
\node[c] (Ax1)  at (9.65,.6)  [label=below:\tiny $2$]{} ;
 \node[c] (Bx1)  at (10.95,.6)  [label=below:\tiny $10$]{} ;
 \node[c] (Ay1)  at (8.95,1.5)  [label=left:\tiny $12$]{} ;
 \node[c] (By1)  at (8.95,2.8)  [label=left:\tiny $20$]{} ;
 \node[c] (Cy1)  at (8.95,4.1)  [label=left:\tiny $100$]{} ;
 \node[b] (AA1)  at (9.65,1.6)  [label=below:\tiny $212$]{} ;
 \node[b] (BA1)  at (10.95,1.6)  [label=below:\tiny $1012$]{} ;
 \node[b] (AB1)  at (9.65,2.8)  [label=above:\tiny $220$]{} ;
 \node[b] (BB1)  at (10.95,2.8)  [label=above:\tiny \hspace{-22pt} $1020$]{} ;
 \node[b] (BC1)  at (10.95,4.1)  [label=above:\tiny $1100$]{} ;
 \path [->, cyan, thick] (Ax1) edge (Bx1);
 \path [->>, cyan, thick] (Ay1) edge (By1);
 \path [->, cyan, thick] (By1) edge (Cy1);
 \path [->, thick] (AA1) edge (BA1);
 \path [->, thick] (AB1) edge (BB1);
 \path [->, thick] (BB1) edge (BC1);
 \path [->>, thick] (AA1) edge (AB1);
 \path [->>, thick] (BA1) edge (BB1);
\end{tikzpicture}

The above said plainly extends to block decompositions with more than two factors (of odd numbers too, with the final sequence of $1$s being irrelevant). So, let us consider a block decomposition \[\mathbf{n}=\mathbf{n}_1\cdots\mathbf{n}_r\] of the minimal hyperbinary expansion of an even number $n$. In terms of reductions, we have that every hyperbinary expansion $\mathbf{n}'$ of $n$ can be decomposed in $\Sigma^*$ in exactly one way as a product
\[
\mathbf{n}'=\mathbf{n}'_1\cdots\mathbf{n}'_r
\]
of nonempty factors, with each factor $\mathbf{n}'_i$ coming from reduction of $\mathbf{n}_i$, according to \eqref{Path1} or \eqref{Path2}. In a more technical and complete form, we end up with the following proposition.

\begin{prop}
Let $\mathbf{n}=\mathbf{n}_1\cdots \mathbf{n}_r$ be the block decomposition of the minimal hyperbinary expansion of an even number $n$, and let $n_1$, $\ldots$, $n_r$ be the (even) numbers with respective expansions $\mathbf{n}_1$, $\ldots$, $\mathbf{n}_r$. Then there is a uniquely determined morphism of labeled directed graphs that maps $A(n)$ isomorphically onto an induced subgraph of the Cartesian product
\[
A\left(n_1\right)\,\Box\,\cdots\,\Box\, A\left(n_r\right)\;
\]
such that for each vertex $\mathbf{n}'$ of $A(n)$, denoting by
\[
\left(\mathbf{n}''_1,\ldots,\mathbf{n}''_r\right)
\]
its image, we have that
\[
\mathbf{n}'=\mathbf{n}'_1\cdots \mathbf{n}'_r\;,
\]
with $\mathbf{n}'_r:=\mathbf{n}''_r$ and $\mathbf{n}'_i$ being, for each $i\in\{1,\ldots, r-1\}$, either $\mathbf{n}''_i$ or its truncation on the last digit, according to whether $\mathbf{n}''_{i+1}$ is a short or a long expansion. 
\end{prop}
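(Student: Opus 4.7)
The plan is induction on $r$. For $r=1$ there is nothing to prove: $A(n)=A(n_1)$ and the identity morphism serves as the required embedding. The case $r=2$ is essentially already in hand from the paragraphs preceding the statement: \autoref{CG} produces a bijection between a subset $H\subset\mathcal{H}(n_1)\times\mathcal{H}(n_2)$ and $\mathcal{H}(n)$, and the subsequent analysis of the three admissible block-adjacencies (constraints (i)--(iii)) identifies the single-step reductions of $A(n)$ with those of the subgraph induced on $H$ by $A(n_1)\,\Box\, A(n_2)$, preserving arc labels. The truncation-or-restoration rule specialized to two factors is exactly the content of the statement in this base case.

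For the inductive step, set $\mathbf{m}:=\mathbf{n}_1\cdots\mathbf{n}_{r-1}$. Since the block decomposition of $\mathbf{n}$ has no adjacent \Bbls, neither does $\mathbf{m}$; and since every block ends in $2$, $\mathbf{m}$ itself ends in $2$ and is therefore the minimal hyperbinary expansion of a positive even integer $m$ with block decomposition $\mathbf{n}_1\cdots\mathbf{n}_{r-1}$. Applying the $r=2$ case to $\mathbf{n}=\mathbf{m}\cdot\mathbf{n}_r$ yields an embedding $\varphi\colon A(n)\hookrightarrow A(m)\,\Box\, A(n_r)$ as a labeled induced subgraph, and applying the inductive hypothesis to $m$ yields an embedding $\psi\colon A(m)\hookrightarrow A(n_1)\,\Box\,\cdots\,\Box\, A(n_{r-1})$ with the analogous structural description. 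Acting by $\psi$ on the first coordinate of $\varphi$ (that is, composing $\varphi$ with the map $A(m)\,\Box\, A(n_r)\to A(n_1)\,\Box\,\cdots\,\Box\, A(n_r)$ induced by $\psi$ on the first factor and the identity on the second) gives the required embedding, since box-product with an identity preserves the property of being a labeled induced subgraph and composition of induced embeddings is induced.

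Tracing a vertex $\mathbf{n}'$ through the two maps recovers the factorization $\mathbf{n}'=\mathbf{n}'_1\cdots\mathbf{n}'_r$ of the statement together with the truncation-or-restoration rule applied junction by junction; uniqueness of the morphism then follows because, at each adjacency, the factorization of $\mathbf{n}'$ into parts reachable by reduction from the corresponding blocks is forced by the shape of the subsequent part. The main point requiring attention is the induced-subgraph property in the multi-factor product: every arc of $A(n_1)\,\Box\,\cdots\,\Box\, A(n_r)$ connecting two image vertices must lift to a single-step reduction of $A(n)$. Because an arc of a box product modifies exactly one coordinate, any such arc touches only one adjacent-block junction and so, via the recursion, reduces to the two-block verification already handled. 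This is the only step I expect to require genuine bookkeeping.
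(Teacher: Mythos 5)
Your proposal is correct and follows essentially the same route as the paper, which offers no formal proof but asserts that the two-factor analysis (the constraints (i)--(iii) together with \autoref{CG} and the induced-subgraph/Cartesian-product discussion) ``plainly extends'' to $r$ blocks; your induction on $r$, peeling off the last block and applying the two-factor case to $\mathbf{m}=\mathbf{n}_1\cdots\mathbf{n}_{r-1}$ (legitimately, since \autoref{CG} is stated for arbitrary minimal expansions of even numbers, not just single blocks), is exactly the formalization of that claim. The composition and truncation bookkeeping you sketch does go through, since the two-factor truncation only ever affects the last digit of $\mathbf{m}''$, which lies in its final factor.
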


Like in the two factors case, each arc of the Cartesian product
\[
G:=\;\Box_{i\in I}\,G_i
\]
of a family of directed graphs, involve in particular a choice of one of the factors (that is, of an index $i\in I$) and an arc in it. This gives in particular a map $\mathcal{E}(G)\to I$. Since a graph $A(n)$ naturally embeds into a product, we also have an induced map \[\pi:\mathcal{E}(A(n))\to\{1,\ldots,r\}\;,\] with $r$ being the number of blocks in the minimal hyperbinary decomposition of $n$, which we shall call the \emph{place map} of $A(n)$. The meaning is clear: an arc of $A(n)$ is a single-step reduction of a hyperbinary expansion of $n$, and since hyperbinary expansion of $n$ are all split into $r$ factors, the place map associates the arc with the place among $1,\ldots,r$ where the single-step reduction occurs. In this description, single-step reductions `at the hinge' of two factors (necessarily of type $\to$) are considered as occurring on the second one.

The property that is pointed out in the following remark clearly holds for whatever Cartesian product of directed path graphs. It does hold as well for the subgraphs $A(n)$, basically because a single-step reduction that may occur on a given expansion, cannot be `locked' by another single step reduction.

\begin{rem}\label{Inv}
Let $e:=(x,y)$ be an arc of $A(n)$ and let $\mathcal{E}^+(x)$, $\mathcal{E}^+(y)$ be the set of arcs of $A(n)$ from $x$ and from $y$, respectively. For each arc $e_x:=\left(x,x'\right)\in\mathcal{E}^+(x)\smallsetminus\{e\}$ there exists exactly one arc $e_y:=\left(y,y'\right)\in\mathcal{E}^+(y)$ such that $\left(x',y'\right)$ is an arc of $A(n)$.
\end{rem}

\begin{defn}
In notation of \autoref{Inv}, we define the \emph{place-preserving map through $e$} as the map 
\[
\mathcal{E}^+(x)\smallsetminus\{e\}\to\mathcal{E}^+(y)
\]
that sends each $e_x$ to $e_y$.

Given a directed path $\left(e_1,\ldots,e_t\right)$ of $A(n)$, we define more generally the \emph{place-preserving map through $\left(e_1,\ldots,e_t\right)$} the composition of (maximally possible restrictions of) $\alpha_{e_t},\ldots,\alpha_{e_1}$, with $\alpha_{e_i}$ being the place-preserving map through $e_i$, for each $i\in\{1,\ldots,t\}$.
\end{defn}

\begin{rem}
If $\pi:\mathcal{E}(A(n))\to\{1,\ldots,r\}$ is the place map of $A(n)$ and $\alpha:X\to\mathcal{E}^+(y)$ (with $X\subset\mathcal{E}^+(x)$) is a place-preserving map through an arc or a directed path, we have $\pi\circ\alpha=\pi$.
\end{rem}
In other words, the place of any arc from the starting vertex such that the place-preserving map is defined, is the same as the place of its image.

\begin{rem}
Suppose that there exists a directed graph isomorphism $\varphi:A\left(n_0\right)\to A\left(n_1\right)$ between graphs of hyperbinary expansions, and let $\alpha_0:\mathcal{E}^+\left(x_0\right)\smallsetminus\left\{e_0\right\}\to\mathcal{E}^+\left(y_0\right)$, $\alpha_1:\mathcal{E}^+\left(x_1\right)\smallsetminus\left\{e_1\right\}\to\mathcal{E}^+\left(y_1\right)$ be place-preserving maps, respectively through arcs $e_0\in\mathcal{E} \left(A\left(n_0\right)\right)$, $e_1\in\mathcal{E}\left(A\left(n_1\right)\right)$ that correspond to each other through~$\varphi$.

Since the construction in \autoref{Inv} involve only the graph structure, for every arc $e_{x_0}\in\mathcal{E}^+\left(x_0\right)\smallsetminus\left\{e_0\right\}$, denoting by $e_{x_1}\in\mathcal{E}^+\left(x_1\right)\smallsetminus\left\{e_1\right\}$ the arc that corresponds to it through the isomorphism $\varphi$, we have that 
\[
\alpha_0\left(e_{x_0}\right)\qquad\text{and}\qquad\alpha_1\left(e_{x_1}\right)
\]
also correspond to each other through $\varphi$.

Clearly a similar statement holds if one considers directed paths (that correspond through the isomorphism $\varphi$), instead of the arcs $e_0,e_1$.
\end{rem}

Our proof of the isomorphism theorem will rely on the possibility of reconstructing the block decomposition from the labeled structure of $A(n)$. To this end, we need to introduce the following notion.

\begin{defn}
Let us call a \emph{checking path in $A(n)$} a directed path $\left(e_1,\ldots,e_r\right)$ such that for all $i\in\{1,\ldots,r-1\}$ we have that $e_{i+1}$ is not in the image of the place-preserving map through $e_i$.

We also say that such a checking path is \emph{maximal} when there is no arc $e$ such that $\left(e,e_1,\ldots,e_r\right)$ or $\left(e_1,\ldots,e_r,e\right)$ is a checking path.
\end{defn}

Typical checking paths of $A(n)$ are made of reductions that all occur at a given place (for instance, $122\twoheadrightarrow 202\to 1002$ in $A(10)$). That is because no two different single-step reductions of the same expansion can occur on the same place, hence each arc after the first one lies outside the place-preserving map through the preceding one (which has the same place and is outside the domain of the map). But `unlocked' steps may give rise to exceptional checking paths. For instance, in the reduction
\[
122\twoheadrightarrow 202\to 210\;,
\]
the second step occurs on the second factor (actually, at the hinge), but no single-step reduction of $122$ can correspond to it through the place-preserving map through the first step, because `it is locked' by the first block $12$ (cf.~the constraint n.~\ref{Constr12}). Thus that is a checking path as well. Of course when directed paths correspond to each other trough a graph isomorphism, we have that one of them is a checking path if and only if so is the other, and henceforth also that one of them is maximal if and only if the other is maximal.

\begin{thm}
Let $m,n$ be even nonnegative integers. If $A(m)$ and $A(n)$ are isomorphic as edge-labeled directed graphs, then $m=n$.
\end{thm}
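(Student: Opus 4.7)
The plan is to extract the block decomposition of the minimal hyperbinary expansion of $n$ from the labeled graph $A(n)$ alone. Since that decomposition determines the minimal expansion and hence $n$, any isomorphism $A(m)\simeq A(n)$ with $m,n$ even would force identical block decompositions and therefore $m=n$.

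I would proceed by induction on the number $r$ of blocks. After identifying the source of $A(n)$ as the unique in-degree-zero vertex, the base case $r=1$ is immediate: $A(n)$ is a directed path of the form \eqref{Path1} (type~$1$) or \eqref{Path2} (type~$2$), and its label pattern together with its length reveals both the type and the length of the sole block.

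For $r\ge 2$, the strategy is to isolate the first block $\mathbf{n}_1$. Examining the outgoing arcs at the source via the constraint analysis preceding \autoref{CG}, an outgoing arc exists at place~$1$ always, and at place $i>1$ if and only if $\mathbf{n}_i$ is a type-$1$ block; moreover, the label of the place-$1$ outgoing arc distinguishes whether $\mathbf{n}_1$ is type-$1$ (label $\twoheadrightarrow$) or type-$2$ (label $\to$). The central technical claim is that among the maximal checking paths starting at the source, exactly one traces the full reduction of $\mathbf{n}_1$ entirely at place~$1$, and since place-preserving maps are graph-theoretic, this distinguished path can be identified intrinsically. Its label sequence ($\twoheadrightarrow^{t}\to$ or $\to^{t}$) and length then determine $\mathbf{n}_1$. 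From its terminal vertex, which corresponds in the Cartesian-product embedding to $(\mathbf{b}_1,\mathbf{n}_2,\ldots,\mathbf{n}_r)$ with $\mathbf{b}_1$ the binary expansion of $n_1$, one should be able to identify a labeled subgraph isomorphic to $A(n')$ for the integer $n'$ with block decomposition $\mathbf{n}_2\cdots\mathbf{n}_r$, and apply the induction hypothesis.

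The main obstacle I anticipate is singling out the place-$1$ maximal checking path in a purely graph-theoretic manner. While the notion of place is defined via the Cartesian-product embedding (which presupposes the block decomposition), the place-preserving maps themselves are intrinsic to the labeled graph, so maximal checking paths are intrinsic as well; what must still be argued is that the place-$1$ maximal checking path is uniquely distinguished among all maximal checking paths from the source, for instance by its label pattern matching \eqref{Path1} or \eqref{Path2} and by the structure of the remainder of $A(n)$ attached to its terminus. A secondary delicate point is verifying that the forward-reachable subgraph from $(\mathbf{b}_1,\mathbf{n}_2,\ldots)$ is genuinely isomorphic, as a labeled directed graph, to $A(n')$, which requires analyzing the hinge between $\mathbf{b}_1$ (now ending in $0$) and $\mathbf{n}_2$ to rule out any cross-block entanglement.
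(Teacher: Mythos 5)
Your overall strategy coincides with the paper's: induction on the number of blocks, identification of the first block $\mathbf{n}_1$ via maximal checking paths issuing from the source, and recursion on the subgraph induced by the descendants of the vertex corresponding to $(\mathbf{b}_1,\mathbf{n}_2,\ldots,\mathbf{n}_r)$. Your preliminary observations are also correct: at the source an outgoing arc exists at place $1$ always and at place $i>1$ exactly when $\mathbf{n}_i$ is of type $1$ (constraint n.~\ref{Constr12} locks the type-$2$ blocks, and those are the only ones that can be locked at their first step), and the label of the place-$1$ arc detects the type of $\mathbf{n}_1$.

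The gap is precisely the ``central technical claim'' that you flag but do not prove, and in the form you state it --- that the place-$1$ maximal checking path is uniquely distinguished among all maximal checking paths from the source, e.g.\ by its label pattern --- it is false. When $\mathbf{n}_1=12$ and $\mathbf{n}_2$ is a block of type $2$, the first step $e_1$ at place $1$ unlocks the first reduction $e_2'$ on $\mathbf{n}_2$, and $(e_1,e_2')$ is a second checking path of length $2$ from the source whose label pattern $(\twoheadrightarrow,\to)$ is identical to that of the place-$1$ path $(e_1,e_2)$. In the extreme case $n=10$ there is even an automorphism of $A(10)$ swapping $1002$ and $210$, hence interchanging the two maximal checking paths of length $2$ from the source, so \emph{no} isomorphism-invariant property whatsoever can single out the place-$1$ one. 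The paper accordingly does not identify that path intrinsically in all cases: a label-and-length characterization works when $\mathbf{n}_1$ is of type $2$, or of type $1$ and length $\ge 3$, but for $\mathbf{n}_1=12$ it must argue separately (using place-preserving maps through auxiliary directed paths) that the $\varphi$-image of $(e_1,e_2)$ at least begins with a reduction at place $1$ and ends at the right vertex, and in the residual case $A(m)=A(10)$ it replaces $\varphi$ by its composition with the automorphism above to force the terminal vertices to correspond. Your secondary worry (that the descendant subgraph of $(\mathbf{b}_1,\mathbf{n}_2,\ldots)$ is a copy of $A(n')$) is comparatively harmless and is dispatched briefly in the paper; the length-$2$ first block is where the real work lies, and your proposal as written does not survive it.
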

\begin{proof}
Let $\mathbf{n}=\mathbf{n}_1\cdots\mathbf{n}_r$ and $\mathbf{m}=\mathbf{m}_1\cdots\mathbf{m}_s$ be the block decompositions of the minimal hyperbinary expansions of $n$ and $m$, respectively. We prove the theorem by induction on $r$, by showing that $r=s$ and $\mathbf{n}_i=\mathbf{m}_i$ for all $i\in\{1, \ldots, r\}$. When $r=0$, then $n=0$, $A(n)$ has only one vertex, the same must hold for the isomorphic graph $A(m)$, hence $s=0$ as well (otherwise some reduction in $A(m)$ would exist). Let us suppose, then, that $r\ge 1$ and that the assertion is true when in place of $n,m$ there are even numbers with isomorphic graph of hyperbinary expansions and one of them comes with a block decomposition consisting of $r-1$ blocks. Let us also fix an isomorphism $\varphi:A(n)\to A(m)$.

Let us suppose first that there exist an arc $e_1$ from the source $\mathbf{n}$ of $A(n)$, that is labeled $\to$. Since the first reduction on blocks of type $1$ is not of that type, $e$ (as a reduction) must occur on a block of type $2$, say of length $t$ (as a word in $\Sigma^*$). Due to the constraint n.~\ref{Constr12}, that block must be $\mathbf{n}_1$. Moreover, let us consider the path $\left(e_1,\ldots, e_t\right)$ of $A(n)$ that starts with $e_1$ and corresponds to the path \eqref{Path2} of $A(\mathbf{n}_1)$. All reductions in that path occur on the first place (that is, the value of the place map on them is always $1$). Hence $\left(e_1,\ldots, e_t\right)$ is a checking path. Moreover, any arc that is not in the path but whose tail is a vertex in the path, must correspond to first reductions on blocks in other places. Since $\mathbf{n}_1$ is a block of type $2$, $\mathbf{n}_2$ must be of type $1$, and then, for each vertex of the path, the constraint n.~\ref{Constr21} do not lock the first single-step reduction that occurs on the (possible) factor $\mathbf{n}_2$, nor of course first single-step reductions that occur on the other blocks that (possibly) follow. In other words, for each $i\in\{1,\ldots,t-1\}$, no vertex from the the head of $e_i$, other than $e_{t+1}$, is outside the image of the place-preserving map through $e_i$. This shows that $\left(e_1,\ldots, e_t\right)$ is the only possible checking path with length $t$ that starts with $e_1$. Clearly the last vertex is the expansion $\mathbf{n}':=\mathbf{n}'_1\mathbf{n}_2\cdots\mathbf{n}_r$, with $\mathbf{n}'_1$ being the ordinary binary expansion of the number with expansion $\mathbf{n}_1$. It easily follows that the place-preserving map through $e_t$ is surjective, and then $\left(e_1,\ldots, e_t\right)$ is the unique maximal checking path that starts with $e_1$.

Now, the arc $f_1$ of $A(m)$ that corresponds to $e_1$ through the isomorphism $\varphi$ must be an arc labeled $\to$ from the source $\mathbf{m}$ of $A(m)$. The arguments above apply to $f_1$ as well, and show that the block $\mathbf{m}_1$ is of type $2$ and there exists exactly one maximal checking path in $A(m)$ that
\begin{itemize}
\item has $\mathbf{m}$ as starting vertex,
\item starts with an arc labeled $\to$,
\item has length equal to the length of $\mathbf{m}_1$,
\item its ending vertex is the expansion $\mathbf{m}':=\mathbf{m}'_1\mathbf{m}_2\cdots\mathbf{m}_s$, with $\mathbf{m}'_1$ being the ordinary binary expansion of the number with expansion $\mathbf{m}_1$.
\end{itemize}
Clearly, the maximal checking paths we have found in $A(n)$ and in $A(m)$ must correspond to each other through $\varphi$. Thus we have that
\begin{description}
\item[S1]\label{S1} $\mathbf{n}_1$ and $\mathbf{m}_1$ are blocks of the same type and of the same length;
\item[S2]\label{S2} the expansions $\mathbf{n}':=\mathbf{n}'_1\mathbf{n}_2\cdots\mathbf{n}_r$ and $\mathbf{m}':=\mathbf{m}'_1\mathbf{m}_2\cdots\mathbf{m}_s$, respectively of $n$ and $m$, with $\mathbf{n}'_1$ and $\mathbf{m}'_1$ being ordinary binary expansions (of some numbers), correspond to each other through the isomorphism $\varphi$.
\end{description}
It is not difficult to see that the subgraph of $A(n)$ induced by the set made of $\mathbf{n}'$ and all its descendants (in other words, the vertices that can be reached from $\mathbf{n}'$ by a directed path) is isomorphic to $A\left(\overline{n}\right)$, with $\overline{n}$ being the number with expansion $\mathbf{n}_2\cdots\mathbf{n}_r$. By the statement \textbf{S2} above, through a restriction of $\varphi$, that subgraph is clearly also isomorphic to the subgraph of $A(m)$ induced by the set made of  $\mathbf{m}'$ and all its descendants, which in turn is isomorphic to the graph of hyperbinary expansions of the number with expansion $\mathbf{m}_2\cdots\mathbf{m}_s$. By the induction hypothesis, we have that $r-1=s-1$ and $\mathbf{n}_i=\mathbf{m}_i$ for each $i\in\{2,\ldots,r\}$. Hence $r=s$ and, by the statement \textbf{S1} above, obviously $\mathbf{n}_1=\mathbf{m}_1$ as well. This proves the theorem in the case when there exists an arc $e_1$ from the source $\mathbf{n}$ of $A(n)$ that is labeled $\to$. Let us note that the final deduction from the statements~\textbf{S1} and~\textbf{S2} above, do not rely on that assumption. Hence, at this point, to prove those statements without that assumption will suffice.

So, let $e_1$ be an arc from the source $\mathbf{n}$ of $A(n)$ that, this time, is labeled $\twoheadrightarrow$. Then, as a reduction, $e_1$ must occur on a block $\mathbf{n}_a$ of type $1$, and this time let $t$ denote its length (necessarily $\ge 2$). The path \eqref{Path1} gives rise to a path in $A(n)$ starting with $e_1$ (made of reductions that all occur on the $a$th place), but except for the last arc when $a\ne 1$ (due to the constraints n.~\ref{Constr11} and n.~\ref{Constr21}). Hence that path can be denoted by $\left(e_1,\ldots,e_{t'}\right)$ with $t'=t$ if $a=1$ and $t'=t-1$ otherwise, and all its arcs are labeled $\twoheadrightarrow$, with the possible exception of $e_t$ (when it actually appears). Among the vertices of $\left(e_1,\ldots,e_{t'}\right)$, the constraint n.~\ref{Constr12} may lock the first-step reduction on $\mathbf{n}_{a+1}$ only at the first one, and only when $\mathbf{n}_{a+1}$ (does exist and) is of type $2$. In that special situation, there is also a checking path $\left(e_1,e'_2\right)$ with $e'_2$ labeled $\to$ and different from $e_2$. On the contrary, when there is no $\mathbf{n}_{a+1}$ of type $2$, $\left(e_1,\ldots,e_{t'}\right)$ is the unique maximal checking path that starts with~$e_1$.

The above analysis implies that when $a=1$ we have:
\begin{itemize}
\item if $t\ge 3$ then $\left(e_1,\ldots,e_t\right)$ is the unique maximal checking path with length $t$, starting vertex $\mathbf{n}$, $e_2$ labeled $\twoheadrightarrow$ and $e_t$ labeled $\to$;
\item if $t=2$ there is no maximal checking path in $A(n)$ with starting vertex $\mathbf{n}$ such that its second arc is labeled $\twoheadrightarrow$ and its last arc is labeled $\to$.
\end{itemize}
This in turn implies that if $A(n)$ has a maximal checking path of length $t\ge 3$, with starting vertex $\mathbf{n}$ and such that its second arc is labeled $\twoheadrightarrow$ and its last arc is labeled $\to$, then $\mathbf{n}_1$ is a block of type $1$ and length $t$. Moreover, the ending vertex of that path must be the expansion $\mathbf{n}'_1\mathbf{n}_2\cdots\mathbf{n}_t$, with $\mathbf{n}'_1$ being the ordinary binary expansion of the number with expansion $n_1$. Since the property of having a maximal checking path of length $t\ge 3$, with the source of the graph as starting vertex, and such that its second arc is labeled $\twoheadrightarrow$ and its last arc is labeled $\to$, is clearly preserved under labeled graph isomorphisms, we conclude that under that hypothesis the statements~\textbf{S1} and~\textbf{S2} are satisfied. As explained before, this conclude the proof also in this case.

We are left with the case when there are no maximal checking paths with the mentioned property. The lack of such paths, together with the assumption that there are no arcs labeled $\to$ from the source $\mathbf{n}$, imply that $\mathbf{n}_1$ is a block of type $1$ and length $2$. Since these conditions are preserved under labeled graphs isomorphisms, this easily leads to the statement~\textbf{S1}, but does not imply the statement~\textbf{S2}. The problem is that the maximal checking path $\left(e_1,e_2\right)$ with $a=1$ (that is, that occurs on the first place) may correspond through $\varphi$ to a path made of a single-step reduction $f_1$ that occurs on a block of type $1$ and length $2$ in a different place, that is followed by a single-step reduction $f_2$ that occurs on an immediately subsequent block of type $2$. This possibility can be ruled out by noting that in this case from the middle vertex of $\left(f_1,f_2\right)$ one can certainly start a path $P$ in $A(m)$ such that for the (respective) images $f'_1$, $f'_2$ of $f_1$, $f_2$ via the place-preserving map through $P$, we have that $\left(f'_1,f'_2\right)$ is not the unique checking path of length $2$ that starts with $f'_1$. That is because the last single-step reduction on the block of type $1$ on which $f_1$ and $f'_1$ act, can be unlocked by $P$. This property cannot hold for $\left(e_1,e_2\right)$, unless it is not the unique checking path of length $2$ that starts with $e_1$, but this is impossible because $\left(f_1,f_2\right)$ does not enjoy this property, which is clearly preserved under graph isomorphisms. Since the aforementioned property that $\left(f_1,f_2\right)$ enjoys is also clearly preserved under graphs isomorphisms, this shows that the path $\left(e_1,e_2\right)$ must correspond to a checking path $\left(g_1,g_2\right)$ in $A(m)$, such that $g_1$ is a reduction that occurs on $\mathbf{m}_1=12$. But if $\mathbf{m}_2$ is of type $2$, $g_2$ may still occur on it. In this case, the middle vertex of $\left(g_1,g_2\right)$ is an expansion of the form $202\mathbf{x}$. The expansion $\mathbf{x}$ cannot begin with $2$, otherwise the checking path $\left(g_1,g_2\right)$ is not maximal as its corresponding $\left(e_1,e_2\right)$. If $\mathbf{x}$ is not the empty word, from $202\mathbf{x}$ one can certainly start a path $Q$ in $A(m)$ with ending vertex of the form $2022\mathbf{y}$. The checking path $\left(g'_1, g'_2\right)$ given by the respective images of $g_1$, $g_2$ via the place-preserving map through $Q$ is not maximal. But from the middle vertex of $\left(e_1,e_2\right)$ no paths with the same property of $Q$ can be constructed. To summarize, when $\mathbf{x}$ is not the empty word, then the ending vertices of the paths $\left(e_1,e_2\right)$ and $\left(g_1,g_2\right)$, that correspond via $\varphi$, are the expansions in the statement~\textbf{S2}, which is therefore true and prove the theorem.

To settle the last case, when $\mathbf{x}$ is empty, we can replace $\varphi$ with its composition (to the left) with the automorphism of $A(m)=A(10)$ that swaps the expansions
\[
210\qquad\text{and}\qquad 1002
\]
and fixes $122$, $202$ and $1010$. With this replacement, the statement~\textbf{S2} which we dreamed at comes true, and the theorem is proved.
\end{proof}

The following theorem is an immediate consequence, if one takes into account that $A(n)$ and $A(2n+1)$ are isomorphic for any nonnegative integer $n$.

\begin{thm}
The graphs $A(m)$ and $A(n)$ are isomorphic as edge-labeled directed graphs if and only if there exists a nonnegative integer $t$ such that
\[
m=2^tn+2^t-1\quad\lor\quad n=2^tm+2^t-1\;.
\]
\end{thm}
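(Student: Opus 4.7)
The plan is to derive this from the previous theorem (the isomorphism statement for even arguments) combined only with the known isomorphism $A(k)\cong A(2k+1)$ valid for every $k\ge 0$. First I would dispose of the easy direction: iterating $A(k)\cong A(2k+1)$ gives, by a straightforward induction on $t$, an isomorphism $A(k)\cong A(2^t k+2^t-1)$ for every $t\ge 0$. Applied with $k=n$ this covers the first disjunct, and with $k=m$ the second.

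For the converse, I would first observe that every nonnegative integer $k$ admits a unique presentation $k=2^t k_0+2^t-1$ with $t\ge 0$ and $k_0$ even: just write $k+1=2^t q$ with $q$ odd and put $k_0:=q-1$. Call $k_0$ the \emph{even reduct} of $k$. Now assuming $A(m)\cong A(n)$, write $m=2^s m_0+2^s-1$ and $n=2^t n_0+2^t-1$ with $m_0,n_0$ even. The easy direction just established gives $A(m_0)\cong A(m)\cong A(n)\cong A(n_0)$, and the preceding theorem forces $m_0=n_0$.

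Assuming without loss of generality $s\le t$, the one-line substitution
\[
2^{t-s}m+2^{t-s}-1=2^{t-s}\bigl(2^s m_0+2^s-1\bigr)+2^{t-s}-1=2^t m_0+2^t-1=n
\]
yields $n=2^{t-s}m+2^{t-s}-1$, which is the desired disjunct; the symmetric case $t\le s$ gives the other one. No real obstacle arises here: the content of the statement is entirely concentrated in the preceding theorem, and what remains is only the bookkeeping that identifies the orbits of $k\mapsto 2k+1$ on nonnegative integers with their unique even representatives.
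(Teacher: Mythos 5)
Your proof is correct and takes exactly the route the paper intends: the paper states this theorem as an immediate consequence of the preceding even-case theorem together with the isomorphism $A(k)\cong A(2k+1)$, and your argument just supplies the (correct) bookkeeping via unique even reducts. Nothing is missing.
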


\section{An iterative formula for \texorpdfstring{$b(n)$}{b(n)}}

\begin{rem}\label{Calc}
In notation of \autoref{CG}, there is a canonical bijection between $\mathcal{H}(n)$ and the subset $H\subseteq\mathcal{H}(n_1)\times\mathcal{H}(n_2)$ made of all pairs $\left(\mathbf{n}''_1,\mathbf{n}'_2\right)$ such that $\mathbf{n}''_1$ ends with $0$ or $\mathbf{n}'_2$ is a short or empty expansion. Hence, denoting by $b_0$, $b_2$ the number of expansions of $n_1$ that end, respectively, with $0$ and with $2$, and by $s$ the number of short or empty expansions of $n_2$, we have
\[
b(n)=b_0b\left(n_2\right)+b_2s\;.
\]
In particular, when $\mathbf{n}_1$ is a block of length $a$, we have
\[
b(n)=\left\{\begin{array}{lr}ab\left(n_2\right)+s&\text{if $\mathbf{n}_1$ is of type $1$}\\&\\b\left(n_2\right)+as&\text{if $\mathbf{n}_1$ is of type $2$}\end{array}\right.\;;
\]
moreover, the number of short expansions of $n$ is
\[
\left\{\begin{array}{lr}(a-1)b\left(n_2\right)+s&\text{if $\mathbf{n}_1$ is of type $1$}\\&\\s&\text{if $\mathbf{n}_1$ is of type $2$}\end{array}\right.\;.
\]
\end{rem}

\begin{prop}
Let $\mathbf{n}=\mathbf{n}_1\cdots\mathbf{n}_r$ be the block decomposition of the minimal hyperbinary expansion of an even nonnegative integer $n$, $\left(a_1,\ldots,a_r\right)$ the sequence of the (respective) lengths of the blocks and $\left(t_1,\ldots,t_r\right)$ the sequence of $1$s and $2$s given by their types.

Let us also define the following functions on triples of integers
\[
b_1(\alpha,\beta,\sigma):=\alpha\beta+\sigma\;,\qquad b_2(\alpha,\beta,\sigma):=\beta+\alpha\sigma\;,
\]
\[
s_1(\alpha,\beta,\sigma):=(\alpha-1)\beta+\sigma\;,\qquad s_2(\alpha,\beta,\sigma):=\sigma\;.
\]
Then we have
\[
b(n)=h_r\;,
\]
with $h_i$, $i\in\{0,\ldots, r\}$, being recursively defined (together with the auxiliary numbers $k_i$) by
\[
h_0:=1\;,\quad k_0:=1\;,
\]
\[
h_{i+1}:=b_{t_{r-i}}\left(a_{r-i},h_i,k_i\right)\;,\quad k_{i+1}:=s_{t_{r-i}}\left(a_{r-i},h_i,k_i\right)\;,\qquad\qquad i\in\{0,\ldots, r-1\}\;.
\]
\end{prop}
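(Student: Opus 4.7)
The plan is to prove the result by induction on~$i$, interpreting $h_i$ and $k_i$ as $b(m_i)$ and $s(m_i)$ respectively, where $m_i$ is a suitable suffix of the block decomposition and $s(\,\cdot\,)$ counts short-or-empty hyperbinary expansions, in the sense used in \autoref{CG}. Concretely, for each $i\in\{0,\ldots,r\}$, let $m_i$ denote the nonnegative integer whose minimal hyperbinary expansion is $\mathbf{n}_{r-i+1}\cdots\mathbf{n}_r$, with the convention $m_0:=0$ (the empty suffix). Since every suffix inherits from $\mathbf{n}=\mathbf{n}_1\cdots\mathbf{n}_r$ a block decomposition with no consecutive blocks of type~2, each $m_i$ is well defined and even (or zero), and $m_r=n$.

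The inductive claim to establish is $h_i=b(m_i)$ and $k_i=s(m_i)$ for every $i\in\{0,\ldots,r\}$, from which $b(n)=b(m_r)=h_r$ follows at once. The base case $i=0$ is immediate: the only hyperbinary expansion of $m_0=0$ is the empty word, hence $b(m_0)=1=h_0$ and $s(m_0)=1=k_0$.

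For the inductive step, I would apply \autoref{Calc} to the product $\mathbf{n}_{r-i}\,(\mathbf{n}_{r-i+1}\cdots\mathbf{n}_r)$, taking the role of $\mathbf{n}_1$ to be the single block $\mathbf{n}_{r-i}$ (of length $a_{r-i}$ and type $t_{r-i}$) and that of $\mathbf{n}_2$ to be the minimal hyperbinary expansion of $m_i$. The two pairs of formulas displayed at the end of \autoref{Calc} then specialize, according to the type $t_{r-i}\in\{1,2\}$, to exactly the defining expressions for $b_{t_{r-i}}(a_{r-i},h_i,k_i)$ and $s_{t_{r-i}}(a_{r-i},h_i,k_i)$. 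By the inductive hypothesis, these equal $h_{i+1}$ and $k_{i+1}$ respectively, while by construction the left-hand sides are $b(m_{i+1})$ and $s(m_{i+1})$, completing the step.

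The argument is essentially a re-indexing of \autoref{Calc}, so no substantial obstacle is expected. The mild points to check, which I would handle in passing, are: (i)~that \autoref{Calc} legitimately applies even when $m_i=0$, so that $\mathbf{n}_2$ is the empty word, which is covered by the convention that treats the empty word as a short-or-empty expansion (and by $\mathcal{H}(0)=\{\text{empty word}\}$); and (ii)~that the suffix $\mathbf{n}_{r-i+1}\cdots\mathbf{n}_r$ is genuinely the block decomposition of the minimal hyperbinary expansion of $m_i$, which follows from the uniqueness of block decompositions together with the absence of consecutive \Bbls{} in~$\mathbf{n}$.
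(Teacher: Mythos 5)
Your proposal is correct and follows exactly the route the paper intends: the paper's own proof is the one-line ``by induction on $r$, it is a straightforward consequence of \autoref{Calc}'', and your argument simply spells out that induction, peeling off one block from the left at each step and tracking the pair $\left(b(m_i),s(m_i)\right)=\left(h_i,k_i\right)$. The two side points you flag (the empty suffix at $i=0$ and the fact that each suffix is itself a block decomposition) are handled appropriately.
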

\begin{proof}
By induction on $r$, it is a straightforward consequence of \autoref{Calc}.
\end{proof}
The above recipe may also be encapsulated in the following (a bit cumbersome) formula:

\begin{multline*}
b(n)=b_{t_1}\Big(\;a_1,b_{t_2}\big(\ldots\left(a_{r-1},b_{t_r}(a_r,1,1),s_{t_r}(a_r,1,1)\right)\ldots\big)\,,\\s_{t_2}\big(\ldots\left(a_{r-1},\;b_{t_r}(a_r,1,1),\;s_{t_r}(a_r,1,1)\right)\ldots\big)\;\Big)\;.
\end{multline*}

To turn the recipe into an algorithm with input the binary representation $d_t\cdots d_0$ of $n=\sum_{\ell=0}^td_\ell2^\ell$ is quite straightforward (one has to consider the binary expansions $\mathbf{n}'_i$s that come from the blocks; $a_1$ and $a_2$ below count the length of the current block, according to its type):

\phantomsection\label{Alg}\begin{algorithm}[H]\caption{$b(n)$ with input the binary expansion $d_t\cdots d_0$~of~$n$}
	\begin{algorithmic}[0]
		\Function{$b$}{$n$}
        \State $i_0\gets 0$
        \While {$d_{i_0}=1$}
        \State $i_0\gets i_0+1$ \Comment Disregard the tail of $1$s
        \EndWhile
        \State $i_0\gets i_0+1$ \Comment Avoid a step for which $d_{i_0}$ is necessarily $0$
		\State $a_1\gets 0$; $a_2\gets 0$; $b \gets 1$; $s\gets 1$
		\For{$\ell \gets i_0, t$}
		\If{$d_\ell=1$}
        \If{$a_1=0$}\State $a_2\gets a_2+1$
        \Else\State $s\gets a_1\cdot b+s$; $b\gets b+s$; $a_1\gets 0$
        \EndIf
		\Else
        \If{$a_2=0$}\State $a_1\gets a_1+1$
		\Else\State $b\gets b+a_2\cdot s$; $a_2\gets 0$; $a_1\gets 1$	
		\EndIf
		\EndIf
		\EndFor
        \State $b\gets b+a_2\cdot s$
		\State \textbf{return} $b$
		\EndFunction
	\end{algorithmic}
\end{algorithm}

Note that the algorithm is iterative, hence much faster than a direct implementation of the well-known formula $b(2n)=b(n)+b(n-1)$, $b(2n+1)=b(n)$ as a recursive function. But, as J.\ Shallit has pointed out to us, to make iterative such a formula is also an easy application of the theory of $k$-regular sequences, or more generally of memoization (and therefore is basically a matter of dynamic programming). In a nutshell, looking at the column pair \[\mathbf{b}(n):=\left(\begin{array}{c}b(n)\\b(n-1)\end{array}\right)\;,\]
for $n>0$ we have
\[
\mathbf{b}(2n)=\left(\begin{array}{cc}1&1\\0&1\end{array}\right)\mathbf{b}(n)
\]
and
\[
\mathbf{b}(2n+1)=\left(\begin{array}{cc}1&0\\1&1\end{array}\right)\mathbf{b}(n)\;.
\]
Hence, denoting respectively by $M_0$ and $M_1$ the above $2\times2$ matrices, $b(n)$ is the top entry of
\begin{equation}\label{Easy}
M_{d_0}\cdots M_{d_t}\left(\begin{array}{c}1\\0\end{array}\right)\;.
\end{equation}
The above formula can also be arranged in terms of blocks of $0$s and of $1$s, by exploiting
\begin{equation}\label{Power}
{M_0}^a=\left(\begin{array}{cc}1&a\\0&1\end{array}\right)\;,\qquad {M_1}^a=\left(\begin{array}{cc}1&0\\ a&1\end{array}\right)\;.
\end{equation}
The algorithm that arises in this way scans the digits left to right, meanwhile \hyperref[Alg]{Algorithm 1} works right to left. But since the top entry in \autoref{Easy} is given by multiplication to the left by the one-row matrix $(1,0)$, one may also immediately deduce a right to left algorithm.

Such kind of algorithms are addressed in the work \cite{M}, too. Actually, \cite[Algorithm~1]{M} calculates the number of BSD representations of $n$ on $i$ ternary units of information, taking as input the binary representation $n_{i-1}\cdots n_0$ of $n$ with length $i$ (possibly with some initial $0$s). To get $b(n)$, according to \cite[Theorem~4]{M}, it suffices to swap $0$s and $1$s in that algorithm. This way one gets exactly an implementation of \autoref{Easy}. In this respect, what we find interesting is the explanation given in \cite{M}, which differs from the one we provided for \autoref{Easy}, which may be considered more standard. After the preliminary stage, where $n$ is replaced with a smaller number $m$ on which the function (the number of BSD representations) takes the same value, the first iteration of \cite[Algorithm~1]{M} looks at the values on the endpoints of the interval $\left[0,2^j\right]\ni m$. These values are named $\mathit{lower}$ and $\mathit{upper}$, respectively. Then the first digit $n_{i-1}$ blesses the subinterval among $\left[0,2^{j-1}-1\right]$ and $\left[2^{j-1},2^j\right]$ which $m$ belongs to. Then $\mathit{lower}$ and $\mathit{upper}$ are replaced with the values relative to that subinterval, simply by taking a sum (due to \cite[Theorem~7]{M}, which can be seen as a `BSD-version' of the formula $b(2n)=b(n)+b(n-1)$, if $b(2n+1)=b(n)$ is taken as granted). The iteration proceeds in the same way, till the requested value on $m$ (hence on $n$). Though the resulting algorithm is the same as the one provided by the more standard setting, the description as a kind of divide and conquer strategy (where one of the two parts is simply disregarded) is valuable in our opinion, because might help to handle possible different situations in which a standard approach may not work. In addition, \cite[Algorithm~1]{M} opens the way to \cite[Algorithm~2]{M} for the function \texttt{BSD-\scriptsize FAST}. The enhancement principally relies on the length $i$ of the BSD representations of $n$, which is substituted by $k:=\lceil\log_2 n\rceil$ for $n<2^i$. When the method is applied to the calculation of $b(n)$ (by exchanging $0$s with $1$s), the enhancement consists in working out at once the first block of $1$s in the ordinary binary representation of $n$, by means of a multiplication by the length $i-k$ of that block. In terms of the description given by \autoref{Easy}, the enhancement consists in the substitution of the tail
\[
M_{d_{i-k}}\cdots M_{d_{i-1}}\left(\begin{array}{c}1\\0\end{array}\right)
\]
with
\[
M_1^{i-k}\left(\begin{array}{c}1\\0\end{array}\right)=\left(\begin{array}{c}1\\ i-k\end{array}\right)\;,
\]
followed by the column splitting $(1,i-k)=(1,0)+(i-k)(0,1)$. In these terms, the call $\texttt{BSD}(n,k)+(i-k)\times\texttt{BSD}(2^k-n,k)$ in \cite[Algorithm~2]{M}, where \texttt{BSD} refers to \cite[Algorithm~1]{M}, duplicates the multiplication by $M_{d_0}\cdots M_{d_{i-k-1}}$. Anyway, \cite[Algorithm~2]{M} gives a relevant improvement when $k<<i$, because is $\mathcal{O}(\log n)$ under the hypothesis that the calculation of $k$, which amounts to counting the $1$s in the first block, is negligible. It seems to us that a further enhancement would be obtained by replacing the mentioned call with a recursive one: $\texttt{BSD}(n,k)+(i-k)\times\texttt{BSD-\scriptsize FAST}(2^k-n,k)$. On the other hand, by the same reasons, when \autoref{Easy} is arranged block-wise by means of \autoref{Power}, that approach seems to give an even faster algorithm because it does not involve the mentioned duplication of \cite[Algorithm~2]{M}.

To compare with \hyperref[Alg]{Algorithm 1} here, note that there is a slight difference in the block subdivision. Let us consider for instance $n=42$: its hyperbinary expansion $12122$ involve only three blocks, meanwhile one cannot take advantage of \autoref{Power} when using \autoref{Easy}, as the binary representation is $101010$. This seems to give an advantage to \hyperref[Alg]{Algorithm 1}, under the hypothesis that the instructions that increase or reset a counter, $a_1$ or $a_2$, can be neglected for the purposes of time complexity determination. Namely, when running \hyperref[Alg]{Algorithm 1} on the input $n=42$, the expensive instructions (the ones in the else clauses) are executed only three times.

As pointed out in the introduction, mainly due to the striking result of \cite{CW}, the connection with the Stern's diatomic sequence $c(n)=b(n-1)$ may be of particular interest (that is why, like in \cite{M}, we explicitly mentioned that sequence in the title). J. Shallit has also pointed out to us that a matrix description which directly works for $c$, is obtained by setting
\[
M_0:=\left(\begin{array}{cc}1&0\\1&1\end{array}\right)\;,\qquad M_1:=\left(\begin{array}{cc}0&1\\-1&2\end{array}\right)\;.
\]

A thorough comparison of computational costs may be quite subtle and go beyond the author's skills, both in the theoretical and in the applied fields. For instance, in the above outline, though operations on counters in \hyperref[Alg]{Algorithm 1} may be somewhat assimilated to the calculation of $k$ in \cite[Algorithm~2]{M}, it can be argued that in the latter case to detect a single block of $1$s is an operation that in many cases can be performed at CPU level. It may also be argued that the algorithms we are dealing with are all $\mathcal{O}(\log n)$, hence to compare them may appear of little interest. But, like in the case of the interesting description of \cite[Algorithm~1]{M}, we believe that to explicitly connect the earlier discrete-geometric results with their related computational aspects that are discussed in this section, may turn to be useful in different situations of the same kind.

\end{document}